\numberwithin{equation}{section}
\theoremstyle{plain}
\newtheorem{theorem}[subsection]{Theorem}
\newtheorem{proposition}[subsection]{Proposition}
\newtheorem{lemma}[subsection]{Lemma}
\newtheorem{corollary}[subsection]{Corollary}
\theoremstyle{definition}
\newtheorem{definition}[subsection]{Definition}
\theoremstyle{remark}
\renewcommand{\leq}{\leqslant}
\renewcommand{\geq}{\geqslant}
\newsavebox{\proofbox}
\savebox{\proofbox}{\begin{picture}(7,7)  \put(0,0){\framebox(7,7){}}\end{picture}}
\newcommand\Z{\mathbb{Z}}
\newcommand\R{\mathbb{R}}
\newcommand\SL{\operatorname{SL}}
\newcommand\GL{\operatorname{GL}}
\newcommand\id{{\operatorname{id}}}
\begin{document}

\title[A nilpotent dimension lemma]{A nilpotent Freiman dimension lemma}

\author{Emmanuel Breuillard}
\address{Laboratoire de Math\'ematiques\\
B\^atiment 425, Universit\'e Paris Sud 11\\
91405 Orsay\\
FRANCE}
\email{emmanuel.breuillard@math.u-psud.fr}

\author{Ben Green}
\address{Centre for Mathematical Sciences\\
Wilberforce Road\\
Cambridge CB3 0WA\\
England }
\email{b.j.green@dpmms.cam.ac.uk}

\author{Terence Tao}
\address{Department of Mathematics, UCLA\\
405 Hilgard Ave\\
Los Angeles CA 90095\\
USA}
\email{tao@math.ucla.edu}

\begin{abstract} We prove that a $K$-approximate subgroup of an arbitrary torsion-free nilpotent group can be covered by a bounded number of cosets of a nilpotent subgroup of bounded rank, where the bounds are explicit and depend only on $K$. The result can be seen as a nilpotent analogue to Freiman's dimension lemma.
\end{abstract}

\maketitle
%\tableofcontents

\begin{center}\emph{To the memory of Yahya Ould Hamidoune}\end{center}

\section{Introduction}

Freiman's Theorem (see e.g. \cite[Theorem 5.33]{tv-book}) asserts that any finite subset $A \subseteq \Z$ with \emph{doubling} $K$, that is to say with $|A+A|\leq K|A|$ for some parameter $K \geq 1$, is contained in a generalized arithmetic progression $P$ with rank $O_K(1)$ and size $O_K(1)|A|$. A number of papers have appeared in the past few years attempting to prove analogues of this result in groups other than the additive group of integers, and in particular in non-commutative groups. See, for example, \cite{breuillard-green,bg,breuillard-green-unitary,bgt,fisher-katz-peng,gill-helfgott,green-ruzsa,helfgott-sl2, helfgott-sl3,pyber-szabo,tao-noncommutative,tao-solvable}.

In \cite{bgt-structure}, we established a structure theorem for sets of small doubling and for \emph{approximate subgroups} of arbitrary groups. Approximate subgroups are symmetric finite subsets $A$ of an ambient group whose product set $AA$ can be covered by a bounded number of translates of the set. In particular they are sets of small doubling. They were introduced in \cite{tao-noncommutative}, because not only are they easier to handle than arbitrary sets of small doubling, but the study of arbitrary sets of small doubling reduces to a large extent to that of approximate subgroups. We refer the reader to Section \ref{facts} for a precise definition and a reminder of their basic properties.

One of the main theorems proved in \cite{bgt-structure} asserts, roughly speaking, that approximate subgroups can be covered by a bounded number of cosets of a certain finite-by-nilpotent subgroup with bounded complexity. The bounds on the number of cosets and on the complexity (rank and step) of the nilpotent subgroup depend only on the doubling parameter $K$. 

It turns out that there is an intimate analogy between approximate groups and neighborhoods of the identity in locally compact groups. In \cite{bgt-structure}, this analogy was worked out making use of ultrafilters in order to build a certain limit approximate group to which we then applied an adequately modified version of the tools developed in the 1950s for the solution of Hilbert's fifth problem on the structure of locally compact groups.

The use of ultrafilters makes the results of \cite{bgt-structure} ineffective in a key way. The aim of the present note is to give, in a fairly special case, a simple argument independent of \cite{bgt-structure} but which achieves the same goal with explicit bounds. Our main result is the following.

\begin{theorem}\label{freiman} Let $G$ be a simply-connected nilpotent Lie group and let $A$ be a $K$-approximate subgroup of $G$. Then $A$ can be covered by at most $\exp(K^{O(1)})$ cosets of a closed connected Lie subgroup of dimension at most $K^{O(1)}$.
\end{theorem}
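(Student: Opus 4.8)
The plan is to exploit the Lie algebra structure of a simply-connected nilpotent Lie group $G$ via the exponential map $\exp : \g \to G$, which is a diffeomorphism. Writing $\log = \exp^{-1}$, the set $\log A \subseteq \g$ is a symmetric neighbourhood-free finite set, and the $K$-approximate group condition on $A$ translates, via the Baker--Campbell--Hausdorff formula, into an approximate additive-closure condition on $\log A$ modulo lower-order (commutator) terms. The key point is that $\g$ is a finite-dimensional $\R$-vector space, so ``dimension'' in the sense of the theorem is just the linear-algebraic dimension of a subspace of $\g$; the target Lie subgroup will be $\exp(\h)$ for a suitable Lie subalgebra $\h \leq \g$. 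So the heart of the argument is: given the set $X = \log A$, find a linear subspace $V \leq \g$ of dimension $K^{O(1)}$ such that $X$ is covered by $\exp(K^{O(1)})$ translates of $V$ in $G$, and such that $V$ can be taken to be a subalgebra.

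First I would reduce to controlling the \emph{linear span} of $X$, or rather of a large symmetric subset of it. The idea is an iterated Freiman-type dimension argument: if $A$ has doubling $K$, then by Ruzsa-type covering (the Ruzsa covering lemma, which holds in any group and is effective) one controls iterated product sets $A^n$ by $K^{O(n)}$ translates of $A$. Working in coordinates on $\g$, one shows that the real vector space $W$ spanned by $\log A$ (or by $\log A^2$, to absorb BCH corrections) cannot have too large a dimension relative to $K$: if $\dim W$ were large, one could extract a long ``independent'' sequence of elements of $\log A$, and then the sums of signed subsets of these elements — which, up to BCH commutator corrections lying in the span, live in $A^{O(1)}$ — would force $|A^n| \geq (\text{something growing})^{\dim W}$, contradicting $|A^n| \leq K^{O(n)} |A|$ for $n$ bounded. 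The BCH corrections are the technical nuisance here: one handles them by induction on the nilpotency step $s$ of $G$, quotienting by the last term $G_s$ of the lower central series (which is central, connected, simply connected) so that the image of $A$ in $G/G_s$ is again a $K$-approximate subgroup in a lower-step group, and the kernel contribution is abelian where Freiman's classical dimension lemma applies directly.

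Second, having obtained a subspace $V$ of dimension $K^{O(1)}$ with $A \subseteq \exp(V)$ up to $\exp(K^{O(1)})$ cosets, I would upgrade $V$ to an actual Lie subalgebra $\h$. The natural candidate is the Lie subalgebra generated by $V$; the point is that since $V$ already almost contains the relevant commutators (they arise from BCH applied to products of elements of $A$, which are controlled), the generated subalgebra has dimension only $K^{O(1)}$ as well — iterating brackets at most $s$ times, and $s$ itself is bounded in terms of $K$ by the step-reduction argument (a $K$-approximate subgroup that generates $G$ forces $G$ to have bounded step). Then $\exp(\h)$ is the desired closed connected Lie subgroup, and the covering of $A$ by cosets of $\exp(V)$ refines to a covering by cosets of $\exp(\h) \supseteq \exp(V)$ with no increase in the number of cosets.

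The main obstacle, as indicated, is bookkeeping the Baker--Campbell--Hausdorff commutator corrections throughout the dimension-counting step so that they remain controlled and do not inflate the dimension bound: one wants all error terms to lie in the span of finitely many bracketed products of elements of $\log A$, each of which is itself controlled by the approximate-group hypothesis. I expect this is best organized as a clean induction on the nilpotency step, with the abelian base case being exactly the effective version of Freiman's dimension lemma over $\R$ (which follows from the Ruzsa covering lemma and Plünnecke--Ruzsa-type inequalities, all effective), and the inductive step combining the $G/G_s$ statement with an analysis of how $A$ sits over its central image. A secondary technical point is ensuring the step $s$ is genuinely bounded by $K^{O(1)}$ rather than merely finite — this should come from the observation that a generating $K$-approximate subgroup yields, via commutators, that the lower central series stabilizes within $O(\log K)$ steps.
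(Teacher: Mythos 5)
The key gap in your proposal is the claim that the nilpotency step $s$ of $G$ can be bounded in terms of $K$. This is false, and it breaks the proposed induction. Take $G$ to be the free nilpotent Lie group of step $s$ on two generators $a,b$, and $A = \{1, a^{\pm 1}, b^{\pm 1}\}$. Then $|A|=5$ and $|A^2|$ is a fixed constant independent of $s$, so $A$ is a $K$-approximate subgroup of $G$ for an absolute constant $K$, while $s$ can be arbitrary; moreover $A$ generates a lattice whose connected closure is all of $G$. So ``a generating $K$-approximate subgroup forces $G$ to have bounded step'' is simply wrong. Since your plan is an induction on $s$ which loses constant factors at each level, and since the dimension of $G_s$ can itself be unbounded, the bounds you obtain would not be uniform in the ambient dimension or step, which is precisely the difficulty the theorem is designed to overcome.

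There are two further, related soft spots even before that. First, the claim that $\log A$ (or $\log A^2$) spans a subspace of dimension $K^{O(1)}$ is not established by the ``extract independent elements and count signed sums'' sketch: $|A|$ may be enormous, so $2^d \leq K^{O(d)}|A|$ gives nothing until $d$ exceeds $\log_2|A|$, which is unbounded. Freiman's dimension lemma does control the affine span in the abelian case, but its proof is not the Pl\"unnecke--Ruzsa count you outline, and its nilpotent extension is exactly what is at stake, not a tool one may invoke. Second, passing from a subspace $V$ to the Lie subalgebra it generates can inflate the dimension by a factor depending on the step, which as noted is unbounded.

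The paper's proof takes a genuinely different route that avoids ever bounding the step or the span of $\log A$. It builds, inside $G$, an increasing chain $\{1\}=H_0\subset H_1\subset\cdots\subset H_k$ of closed connected subgroups with $\dim H_i = i$, each new one-parameter direction being supplied by an element of large centralizer (Proposition \ref{nilpotent-commutator}, valid in any residually nilpotent group). A Gleason-style counting lemma (Lemma \ref{gleason} and its refinement Proposition \ref{nilpotent}) then forces $|A^{10}| \geq k|A^2|$, hence $k \leq K^9$, with no reference to $\dim G$ or the step. This is the key idea your proposal is missing, and it is what makes the argument dimension- and step-free.
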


The main feature of this theorem is that $G$ can be of arbitrary dimension and nilpotency class. One can be completely explicit about the bounds in this theorem as well as in the corollary below, and take $K^{2+29K^9}$ for the first bound on the number of cosets and $K^9$ for the bound on the dimension.

Theorem \ref{freiman} can be compared with Freiman's dimension lemma (see \cite[Theorem 5.20]{tv-book}), according to which a finite set with doubling at most $K$ in an arbitrary vector space over $\R$ is contained in an affine subspace with dimension bounded at most $\lfloor K-1 \rfloor$.

We note that \cite{bgt-structure} contains a result similar to Theorem \ref{freiman}, and even with the better bound $O(\log K)$ for the dimension of the nilpotent subgroup. Unlike Theorem \ref{freiman}, however, this provides no explicit bound whatsoever on the number of translates.

\begin{corollary}\label{cor} Let $G$ be a residually-\textup{(}torsion-free nilpotent\textup{)} group and let $A$ be a finite $K$-approximate subgroup of $G$. Then $A$ can be covered by at most $\exp(K^{O(1)})$ cosets of a nilpotent subgroup of nilpotency class at most $K^{O(1)}$.
\end{corollary}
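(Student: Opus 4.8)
The plan is to derive Corollary~\ref{cor} from Theorem~\ref{freiman} in two stages: first I would pass from simply-connected nilpotent Lie groups to arbitrary torsion-free nilpotent groups using Mal'cev's theorem, and then from torsion-free nilpotent groups to residually-(torsion-free nilpotent) groups by working in a suitable torsion-free nilpotent quotient and pulling the conclusion back up. The intermediate statement to be proved is: if $\Gamma$ is torsion-free nilpotent and $A\subseteq\Gamma$ is a finite $K$-approximate subgroup, then $A$ is covered by $\exp(K^{O(1)})$ cosets of a subgroup of $\Gamma$ of nilpotency class at most $K^{O(1)}$ (indeed of Hirsch length at most $K^{O(1)}$).

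For this intermediate statement, observe that $\Lambda:=\langle A\rangle$ is a finitely generated torsion-free nilpotent group, so by Mal'cev's theorem it embeds as a lattice in a simply-connected nilpotent Lie group $\mathbf{G}$ whose dimension is the Hirsch length of $\Lambda$ and whose nilpotency class is that of $\Lambda$. Applying Theorem~\ref{freiman} to $A\subseteq\mathbf{G}$ gives a closed connected Lie subgroup $H\leq\mathbf{G}$ with $\dim H\leq K^{O(1)}$ and elements $x_{1},\dots,x_{m}$, $m\leq\exp(K^{O(1)})$, with $A\subseteq\bigcup_{j}x_{j}H$; replacing each $x_{j}$ by a point of $A$ in its coset I may assume $x_{j}\in A\subseteq\Lambda$. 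Since $\mathbf{G}$ is simply connected nilpotent, so is $H$, and $H\cap\Lambda$ is a discrete subgroup of $H$, hence a finitely generated torsion-free nilpotent group of Hirsch length at most $\dim H$ and of nilpotency class at most $\dim H\leq K^{O(1)}$ (the dimension of a nilpotent Lie algebra bounds its class). Because $x_{j}\in\Lambda$ we have $x_{j}H\cap\Lambda=x_{j}(H\cap\Lambda)$, so $A\subseteq\bigcup_{j}x_{j}(H\cap\Lambda)$, which is the desired covering with $\rho:=H\cap\Lambda$.

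Now let $G$ be residually-(torsion-free nilpotent) and $A\subseteq G$ a finite $K$-approximate subgroup. The class bound from the intermediate statement depends only on $K$; call it $c=K^{O(1)}$, and set $M:=3\cdot 2^{c+1}$. Since $A$ is finite and $G$ is residually-(torsion-free nilpotent), there is a normal subgroup $N\trianglelefteq G$ with $G/N$ torsion-free nilpotent such that $\pi\colon G\to G/N$ is injective on $A^{M}$ (intersect finitely many normal subgroups separating the nontrivial elements of $A^{2M}$, and use that a finite direct product of torsion-free nilpotent groups, and any subgroup of one, is torsion-free nilpotent). By the basic facts of Section~\ref{facts}, $\pi(A)$ is a $K$-approximate subgroup of $G/N$, so the intermediate statement yields a subgroup $\rho\leq G/N$ of class $c\leq K^{O(1)}$ and elements $a_{1},\dots,a_{m}\in A$, $m\leq\exp(K^{O(1)})$, with $\pi(A)\subseteq\bigcup_{j}\pi(a_{j})\rho$. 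A Ruzsa covering argument then gives, for the symmetric set $S:=A^{2}\cap\pi^{-1}(\rho)\subseteq A^{2}$, the inclusion $A\subseteq\bigcup_{j}a_{j}S$, and hence $A\subseteq\bigcup_{j}a_{j}H_{0}$ where $H_{0}:=\langle S\rangle\leq G$.

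The one delicate point --- which I expect to be the main obstacle --- is to verify that $H_{0}$ itself, and not merely its image $\pi(H_{0})=\langle\pi(S)\rangle\leq\rho$, is nilpotent of class at most $c$: a priori one only knows $\gamma_{c+1}(H_{0})\leq\ker\pi$, and $\ker\pi$ may be large. Here one uses the standard fact that $\gamma_{c+1}(\langle S\rangle)$ is the normal closure in $\langle S\rangle$ of the $(c+1)$-fold left-normed commutators $[s_{1},\dots,s_{c+1}]$ of generators $s_{i}\in S$. By iterating the estimate $[A^{r},A^{2}]\subseteq A^{2r+4}$ (valid since $A$ is symmetric), such a commutator lies in $A^{3\cdot 2^{c+1}-4}\subseteq A^{M}$; its image under $\pi$ is a $(c+1)$-fold commutator of elements of $\rho$, hence trivial since $\rho$ has class $\leq c$; and as $\pi$ is injective on $A^{M}$ the commutator is itself trivial. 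Thus all the normal generators of $\gamma_{c+1}(H_{0})$ vanish, so $\gamma_{c+1}(H_{0})=1$. Since a residually-torsion-free group is torsion-free, $H_{0}$ is a torsion-free nilpotent subgroup of $G$ of class at most $K^{O(1)}$ covering $A$ in at most $\exp(K^{O(1)})$ cosets. (No bound on the Hirsch length of $H_{0}$ is obtained; losing control of it is the price of the residual reduction, and is why the corollary asks only for a class bound.)
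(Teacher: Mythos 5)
Your proof is correct and follows essentially the same route as the paper: reduce to Theorem~\ref{freiman} via Mal'cev's embedding theorem in the torsion-free nilpotent case, then pass to a torsion-free nilpotent quotient separating $A^M$ (for $M$ controlling the word length of weight-$(c+1)$ commutators) in the residual case, and verify nilpotency of the lifted subgroup by checking that the relevant commutators land in $A^M$ and die under $\pi$. You are somewhat more explicit than the paper at the final step, spelling out the choice $M=3\cdot 2^{c+1}$, working with the symmetric set $S=A^2\cap\pi^{-1}(\rho)$ rather than a full preimage, and invoking the normal-closure description of $\gamma_{c+1}(\langle S\rangle)$, but the underlying argument is the same.
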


\emph{Remark.} A group $G$ is  residually-(torsion-free nilpotent) if, for every $g \in G \setminus \{1\}$, there is a homomorphism $\pi : G \rightarrow Q$ where $Q$ is torsion-free nilpotent and $\pi(g) \neq \id$. This class of groups includes many non-nilpotent groups, such as all finitely generated free groups.

The main idea behind the proof of Theorem \ref{freiman} and Corollary \ref{cor} is encapsulated in a simple lemma, Lemma \ref{gleason} below. This lemma is essentially due to Gleason \cite{gleason3}, who uses an analogous idea in order to establish that the class of compact connected subgroups of a given locally compact group admits a maximal element.

The paper is organised as follows. In Section \ref{facts}, we recall the definition and some basic properties of approximate groups. In Section \ref{gleaglea}, we state and prove our key lemma. In Section \ref{commutator}, we establish that approximate subgroups of residually nilpotent groups have elements with a large centralizer. Finally in Section \ref{end}, we complete the proof of the main results.

\noindent\textsc{Acknowledgments.} EB is supported in part by the ERC starting grant 208091-GADA. He also acknowledges support from MSRI where part of this work was finalized. TT is supported by a grant from the MacArthur Foundation, by NSF grant DMS-0649473, and by the NSF Waterman award. We would like to thank T.~Sanders for a number of valuable discussions.

\setcounter{tocdepth}{1}

\section{Basic facts on approximate groups}\label{facts}

In this section we recall some basic properties of approximate groups. Background on approximate groups and their basic properties can be found in the third author's paper \cite{tao-noncommutative}.

\begin{definition}[Approximate groups]\label{approx-group-def}
Let $K \geq 1$. A finite subset $A$ of an ambient group $G$ is called \emph{$K$-approximate subgroup} of $G$ if the following properties hold:
\begin{enumerate}
\item the set $A$ is symmetric in the sense that $\id \in A$ and $a^{-1} \in A$ if $a \in A$;
\item there is a symmetric subset $X \subseteq A^3$ with $|X| \leq K$
such that $A \cdot A \subseteq X \cdot A$.
\end{enumerate}
\end{definition}

We record the following important, yet easy, fact.

\begin{lemma} \label{basic} Let $A$ be a $K$-approximate subgroup in an ambient group $G$ and $H$ a subgroup of $G$. Then we have the following facts.
\begin{enumerate}
\item $|A| \leq |A^2 \cap H| |AH/H| \leq |A^3|$.
\item $A^2 \cap H$ is a $K^3$-approximate subgroup of $G$. Moreover $|A^{k} \cap H| \leq K^{k-1} |A^2 \cap H|$ for all $k \geq 1$.
\item if $\pi : G \rightarrow Q$ is a homomorphism then $\pi(A)$ is a $K$-approximate subgroup of $Q$.
\end{enumerate}
\end{lemma}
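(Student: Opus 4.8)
The plan is to treat the three assertions essentially independently, each by an elementary coset-counting or covering argument; the only point that needs a little care is checking that $A^2\cap H$ is an approximate subgroup in the strict sense of Definition~\ref{approx-group-def}.

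For (iii) I would just push everything forward under $\pi$. The set $\pi(A)$ is symmetric since $\pi(\id)=\id$ and $\pi(a)^{-1}=\pi(a^{-1})\in\pi(A)$; and from $A\cdot A\subseteq X\cdot A$ one gets $\pi(A)\cdot\pi(A)=\pi(A\cdot A)\subseteq\pi(X)\cdot\pi(A)$, where $\pi(X)$ is symmetric, lies in $\pi(A^3)=\pi(A)^3$, and has $|\pi(X)|\le|X|\le K$. Hence $\pi(A)$ is a $K$-approximate subgroup of $Q$.

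For (i) I would count the fibres of the natural map $A\to AH/H$, $a\mapsto aH$. Fix a coset $c\in AH/H$ and pick $a_0\in A\cap c$; for any $a\in A\cap c$ we have $a_0^{-1}a\in A^{-1}A=A^2$ (symmetry of $A$) and $a_0^{-1}a\in H$ (as $a,a_0$ lie in the same left coset of $H$), so $a\in a_0(A^2\cap H)$ and therefore $|A\cap c|\le|A^2\cap H|$. Summing over the $|AH/H|$ cosets gives $|A|\le|A^2\cap H|\,|AH/H|$. For the reverse bound, choose one representative $a_c\in A$ of each $c\in AH/H$; the translates $a_c(A^2\cap H)$ lie in pairwise distinct cosets of $H$, hence are disjoint, each has size $|A^2\cap H|$, and each is contained in $A\cdot A^2=A^3$, so $|A^3|\ge|A^2\cap H|\,|AH/H|$.

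For (ii) the first ingredient is the routine inclusion $A^n\subseteq X^{n-1}A$ for every $n\ge1$ (with $X^0=\{\id\}$): since $\id\in A$ we have $A\subseteq A^2\subseteq XA$, and then $A^{n+1}=A^nA\subseteq X^{n-1}A^2\subseteq X^nA$ by induction. The second ingredient is a fibre lemma identical to the computation in (i): if $gA\cap H\neq\emptyset$ and $h_0\in gA\cap H$, then $gA\cap H\subseteq h_0(A^2\cap H)$. Writing $B=A^2\cap H$, these give $A^k\cap H\subseteq\bigcup_{x\in X^{k-1}}(xA\cap H)$ with each fibre of size at most $|B|$, so $|A^k\cap H|\le|X^{k-1}|\,|B|\le K^{k-1}|A^2\cap H|$, which is the ``moreover'' claim. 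For the approximate-group claim, take $k=4$: $B\cdot B\subseteq A^4\cap H\subseteq\bigcup_{x\in X^3}(xA\cap H)$; for each $x\in X^3$ with $B\cdot B\cap xA\cap H\neq\emptyset$ pick $h_x$ in that set, so that $h_x\in B\cdot B\subseteq B^3$ and, by the fibre lemma, $B\cdot B\cap xA\cap H\subseteq h_xB$. Then $B\cdot B\subseteq X'B$ with $X'=\{h_x\}\subseteq B^3$ and $|X'|\le|X^3|\le K^3$; since $B$ is visibly symmetric with $\id\in B$, this exhibits $B$ as a $K^3$-approximate subgroup. The step I expect to need slightly more care is ensuring that $X'$ can simultaneously be taken \emph{symmetric}, contained in $(A^2\cap H)^3$, and of size $\le K^3$; this either costs an immaterial bounded factor or is arranged by a more careful selection of the $h_x$, while everything else is a straightforward coset count.
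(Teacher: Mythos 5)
Your proof is correct and, for part (ii), follows essentially the same route as the paper, which proves only (ii) via the containment $A^k \subseteq X^{k-1}A$ together with the observation that any two elements of $tA\cap H$ differ by right-multiplication by an element of $A^2\cap H$; parts (i) and (iii) are left to the reader there, and your coset-counting and push-forward arguments are the expected ones. Regarding your flagged concern at the end: by choosing $h_x$ inside $BB\cap xA\cap H$ you have already secured $X'\subseteq B^2\subseteq B^3$ (a refinement the paper's terse sketch does not spell out), and for symmetry one simply replaces $X'$ by $X'\cup(X')^{-1}$, which remains inside $B^3$ and has size at most $2K^3$ --- the factor of $2$ is immaterial to the $K^{O(1)}$ bounds used throughout, and the paper itself silently ignores this normalization.
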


\begin{proof} We only prove (ii), and leave the other items to the reader. We have $A^{k} \subseteq X^{k-1}A$. Note, however, that any two elements of $tA \cap H$ differ by right-multiplication by an element of $A^2 \cap H$.  It follows that $X^{k-1}A \cap H$ is contained in $Y(A^2 \cap H)$, where $|Y| \leq |X^{k-1}| \leq K^{k-1}$. The results follows.
\end{proof}

\section{A combinatorial analogue of a lemma of Gleason} \label{gleaglea}

In \cite{gleason3} Gleason established that every locally compact group admits a maximal compact connected subgroup. In order to prove that every increasing sequence of such subgroups must stabilize, he used a key lemma, \cite[Lemma 1]{gleason3}. Translated into the setting of approximate groups, this lemma reads as follows.
 
In what follows we denote by $\langle B \rangle$ the subgroup generated by the subset $B$.

\begin{lemma} \label{gleason} Let $G$ be an arbitrary group and let $\{\id\}=H_0 \subset H_1  \subset \dots \subset H_k$ be a nested sequence of subgroups of $G$. Let $A$ be a finite symmetric subset of $G$. Let $A_i:=A^2 \cap H_i$. Assume that $\langle A_{i+1} \rangle \nsubseteq A_{i+1}H_i$ for every $i=0,\dots,k-1$. Then $|A^5| \geq k|A|$.
\end{lemma}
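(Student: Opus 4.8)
The plan is to produce $k$ elements $t_1,\dots,t_k$, one for each inclusion $H_i \subset H_{i+1}$, such that the right translates $A t_1,\dots,A t_k$ are pairwise disjoint subsets of $A^5$. Since each $A t_j$ has cardinality $|A|$, this immediately gives $|A^5|\geq k|A|$. Concretely, for each $i=0,\dots,k-1$ I will find $t_{i+1}$ with $t_{i+1}\in A^4\cap H_{i+1}$ and $t_{i+1}\notin A_{i+1}H_i$: the first condition guarantees $A t_{i+1}\subseteq A\cdot A^4=A^5$, and the second will be responsible for disjointness.

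The hypothesis is used only to locate such a $t_{i+1}$, via the claim that $A_{i+1}^2\nsubseteq A_{i+1}H_i$ for every $i$. Indeed, suppose $A_{i+1}^2\subseteq A_{i+1}H_i$ and abbreviate $B:=A_{i+1}$ (a symmetric set containing $\id$) and $H:=H_i$. A short induction then shows $B^n\subseteq BH$ for all $n\geq1$: the cases $n=1,2$ are immediate, and if $B^n\subseteq BH$ then $B^{n+1}=B\cdot B^n\subseteq B(BH)=B^2H\subseteq(BH)H=BH$, using $B^2\subseteq BH$ and $HH=H$. As $B$ is symmetric, $\langle B\rangle=\bigcup_{n\geq1}B^n$, so this would force $\langle A_{i+1}\rangle\subseteq A_{i+1}H_i$, contradicting the assumption of the lemma. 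Hence I may choose $t_{i+1}\in A_{i+1}^2\setminus A_{i+1}H_i$; since $A_{i+1}=A^2\cap H_{i+1}$ we have $t_{i+1}\in A_{i+1}^2\subseteq A^4\cap H_{i+1}$, as required.

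It remains to verify disjointness. Using $t_{i+1}\in H_{i+1}$, I first upgrade ``$t_{i+1}\notin A_{i+1}H_i$'' to ``$t_{i+1}\notin A^2H_i$'': if $t_{i+1}=ah$ with $a\in A^2$ and $h\in H_i$, then $a=t_{i+1}h^{-1}\in H_{i+1}$, so $a\in A^2\cap H_{i+1}=A_{i+1}$ and $t_{i+1}\in A_{i+1}H_i$, a contradiction. Now fix $1\leq j<\ell\leq k$ and suppose $a t_j=a' t_\ell$ with $a,a'\in A$. Then $t_\ell=(a')^{-1}a\,t_j\in A^2 t_j$, and since $t_j\in H_j\subseteq H_{\ell-1}$ this gives $t_\ell\in A^2H_{\ell-1}$, contradicting the previous sentence applied with $i=\ell-1$. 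Thus $A t_1,\dots,A t_k$ are pairwise disjoint subsets of $A^5$ of cardinality $|A|$, and $|A^5|\geq k|A|$ follows.

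The only step requiring any idea is the claim $A_{i+1}^2\nsubseteq A_{i+1}H_i$ — the telescoping observation that $A_{i+1}^2\subseteq A_{i+1}H_i$ would already propagate to all powers of $A_{i+1}$ and hence to $\langle A_{i+1}\rangle$. Everything else is routine manipulation of the inclusions $A_{i+1}\subseteq A^2\cap H_{i+1}$ and of the nesting of the $H_i$.
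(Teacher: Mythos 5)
Your proof is correct and follows essentially the same route as the paper: pick $t_{i+1}\in A_{i+1}^2\setminus A_{i+1}H_i$ and show the translates $At_i$ are disjoint subsets of $A^5$. The only difference is cosmetic — you spell out the induction $B^n\subseteq BH$ behind the claim $A_{i+1}^2\nsubseteq A_{i+1}H_i$, which the paper dismisses as ``clearly,'' and you phrase the disjointness via the upgrade $t_{i+1}\notin A^2H_i$ rather than the paper's identity $A^2h_j\cap H_{i+1}=(A^2\cap H_{i+1})h_j$; these are the same computation in different packaging.
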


\begin{proof} Clearly $A_{i+1}^2 \nsubseteq A_{i+1}H_i$, otherwise this would contradict the standing assumption. Pick $h_{i+1} \in A_{i+1}^2 \setminus A_{i+1}H_i$ for each $i=0,\dots,k-1$. We claim that the sets $Ah_i$ are disjoint. Indeed, if $Ah_{i+1} \cap Ah_j \neq \varnothing$ for some $j \leq i$ then $h_{i+1} \in A^2h_j \cap H_{i+1} = (A^2 \cap H_{i+1})h_j = A_{i+1} h_j\subseteq  A_{i+1}H_i$. This contradicts our assumption. 
\end{proof}

If $G$ is assumed to be a simply-connected nilpotent Lie group, then the above lemma can be refined in the following manner.

\begin{proposition} \label{nilpotent} Let $G$ be a simply-connected nilpotent Lie group and let $\{\id\}=H_0 \subset H_1  \subset \dots \subset H_k$ be a nested sequence of closed connected subgroups of $G$. Assume that, for each $i=0,\dots,k-1$, the set $A^2 \cap H_i$ is \emph{strictly} contained in $A^2 \cap H_{i+1}$. Then $|A^5| \geq k|A|$.
\end{proposition}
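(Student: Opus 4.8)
The plan is to deduce Proposition~\ref{nilpotent} directly from Lemma~\ref{gleason}. Concretely, I would show that in the present setting --- with $G$ simply-connected nilpotent and the $H_i$ closed and connected --- the assumption that $A^2 \cap H_i$ is \emph{strictly} contained in $A^2 \cap H_{i+1}$ already implies the hypothesis $\langle A_{i+1}\rangle \nsubseteq A_{i+1}H_i$ of Lemma~\ref{gleason}, where as before $A_i := A^2 \cap H_i$ and $A$ is the finite symmetric set carried over from that lemma. It suffices to prove the contrapositive, so I would fix $i$, assume $\langle A_{i+1}\rangle \subseteq A_{i+1}H_i$, and aim to conclude $A_i = A_{i+1}$.

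Set $L := \langle A_{i+1}\rangle$, a subgroup of $H_{i+1}$. The first step is a counting observation: from $L \subseteq A_{i+1}H_i = \bigcup_{b \in A_{i+1}} bH_i$ and the fact that, for $\ell,\ell' \in L$, one has $\ell H_i = \ell'H_i$ if and only if $\ell^{-1}\ell' \in L \cap H_i$, one sees that $L \cap H_i$ has index at most $|A_{i+1}|$, in particular finite index, in $L$. The second step deduces $L \subseteq H_i$: given $g \in L$, finiteness of the index forces two of the powers $g^0, g^1, g^2, \dots$ into a common coset of $L \cap H_i$, hence $g^n \in L \cap H_i \subseteq H_i$ for some integer $n \geq 1$, and then the Lie-theoretic input takes over. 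Since $G$ is simply-connected nilpotent, $\exp\colon \g \to G$ is a diffeomorphism and the closed connected subgroup $H_i$ equals $\exp(\h_i)$ for a Lie subalgebra $\h_i$; writing $g = \exp(X)$ gives $\exp(nX) = g^n \in \exp(\h_i)$, so $nX \in \h_i$, hence $X \in \h_i$ because $\h_i$ is a linear subspace, hence $g = \exp(X) \in H_i$. Finally $L \subseteq H_i$ yields $A_{i+1} \subseteq L \cap H_i \subseteq A^2 \cap H_i = A_i$, and combined with the trivial inclusion $A_i \subseteq A_{i+1}$ this gives $A_i = A_{i+1}$, completing the contrapositive. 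Applying Lemma~\ref{gleason} then gives $|A^5| \geq k|A|$.

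I expect the only genuinely non-formal step to be the ``root-closedness'' used above: a closed connected subgroup $H$ of a simply-connected nilpotent Lie group satisfies $g^n \in H \Rightarrow g \in H$ for every $n \geq 1$. This is the single place where simple-connectedness --- equivalently, that $\exp$ is a global diffeomorphism --- is indispensable; it fails without that hypothesis (already in a torus), and everything else in the argument is elementary coset bookkeeping designed to put us back into the hypotheses of Lemma~\ref{gleason}.
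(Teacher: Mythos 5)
Your proposal is correct and follows essentially the same route as the paper: reduce to Lemma \ref{gleason} by verifying its hypothesis via the contrapositive, establish that $L \cap H_i$ has finite index in $L := \langle A_{i+1}\rangle$, and then invoke the structure of simply-connected nilpotent Lie groups to conclude $L \subseteq H_i$ and hence $A_{i+1} = A_i$. The one point of difference is cosmetic: where the paper cites Lemma \ref{connected-closure} (quoted from Raghunathan) --- that finite-index subgroups have the same connected closure --- you instead prove the needed consequence directly by observing that a closed connected subgroup $H_i = \exp(\h_i)$ is root-closed (since $\exp$ is a global diffeomorphism and $\h_i$ is a linear subspace), and that finite index forces some positive power of each $g \in L$ into $H_i$. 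That is precisely the mechanism underlying the cited lemma, so your argument is a self-contained unpacking of the paper's reference rather than a different proof strategy.
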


In order to prove this last proposition we first recall the following classical fact (see \cite[chap. 2]{raghunathan}).

\begin{lemma}\label{connected-closure} Let $G$ be a simply-connected nilpotent Lie group. For every subgroup $\Gamma$ in $G$, there exists a unique minimal closed connected subgroup $\overline{\Gamma}$ containing it.  Moreover, if $\Gamma_0$ has finite index in $\Gamma$, then $\overline{\Gamma}_0=\overline{\Gamma}$.
 %Let $H$ is a closed subgroup of $G$ and $H^0$ its connected component of the identity. Assume that $H/H^0$ is finite. Then $H=H^0$.
\end{lemma}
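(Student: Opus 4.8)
\textbf{Proof proposal for Lemma \ref{connected-closure}.}

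The plan is to realise everything inside the exponential coordinates of $G$. Since $G$ is simply-connected nilpotent, the exponential map $\exp : \g \to G$ is a diffeomorphism, with inverse $\log$; closed connected subgroups of $G$ correspond bijectively, via $\log$, to Lie subalgebras of $\g$, i.e. to linear subspaces of $\g$ that are closed under the Lie bracket. Given an arbitrary subgroup $\Gamma \subseteq G$, I would define $\overline{\Gamma}$ to be $\exp(\h)$, where $\h$ is the smallest Lie subalgebra of $\g$ containing the set $\log \Gamma$ — this smallest subalgebra exists because the intersection of any family of Lie subalgebras is a Lie subalgebra, and $\g$ itself is one containing $\log \Gamma$. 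By construction $\h$ is the linear span of all iterated brackets of elements of $\log \Gamma$, which (by nilpotency of $\g$) is a finite-dimensional computation terminating after at most $s$ bracketings, $s$ being the step. Then $\overline{\Gamma} = \exp(\h)$ is a closed connected subgroup, it contains $\Gamma = \exp(\log \Gamma)$, and any closed connected subgroup $L \supseteq \Gamma$ has $\log L$ a Lie subalgebra containing $\log \Gamma$, hence $\log L \supseteq \h$, hence $L \supseteq \overline{\Gamma}$; this gives both existence of a minimal such subgroup and its uniqueness.

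For the second assertion, suppose $\Gamma_0 \leq \Gamma$ has finite index $n$. Trivially $\overline{\Gamma}_0 \subseteq \overline{\Gamma}$, so it suffices to show $\Gamma \subseteq \overline{\Gamma}_0$. Fix $\gamma \in \Gamma$. Since $[\Gamma : \Gamma_0] = n$, the powers $\gamma, \gamma^2, \dots, \gamma^{n+1}$ cannot all lie in distinct cosets of $\Gamma_0$, so $\gamma^m \in \Gamma_0$ for some $1 \leq m \leq n$ (in fact one can take $m$ to be the order of $\gamma$ in $\Gamma/\Gamma_0$, which divides $n$). Thus $\gamma^m \in \Gamma_0 \subseteq \overline{\Gamma}_0$. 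Now I use that in a simply-connected nilpotent Lie group the closed connected subgroup $\overline{\Gamma}_0$ is \emph{divisible} and \emph{$m$-torsion-free}: the map $x \mapsto x^m$ on $\overline{\Gamma}_0$ corresponds under $\log$ to $v \mapsto mv$ on its Lie algebra (since $\exp(v)^m = \exp(mv)$), which is a bijection. Hence $\gamma^m \in \overline{\Gamma}_0$ forces $\gamma \in \overline{\Gamma}_0$: indeed $\gamma = \exp(v)$ for a unique $v \in \g$, and $\exp(mv) = \gamma^m \in \overline{\Gamma}_0 = \exp(\h_0)$ gives $mv \in \h_0$, hence $v \in \h_0$ as $\h_0$ is a linear subspace, so $\gamma \in \overline{\Gamma}_0$. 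Therefore $\Gamma \subseteq \overline{\Gamma}_0$, and minimality of $\overline{\Gamma}$ gives $\overline{\Gamma} \subseteq \overline{\Gamma}_0$, so the two coincide.

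The only genuinely non-formal input is the correspondence between closed connected subgroups of $G$ and Lie subalgebras of $\g$ via $\exp$, together with the fact that $\exp$ is a global diffeomorphism; both are standard for simply-connected nilpotent Lie groups and are exactly what is recorded in \cite[chap.~2]{raghunathan}. Once that dictionary is in place, existence and uniqueness of $\overline{\Gamma}$ are purely a matter of intersecting subalgebras, and the finite-index stability reduces to the linear-algebra observation that scalar multiplication by a nonzero integer is invertible on a subspace of $\g$. I do not expect any real obstacle; the one point to handle with a little care is making sure $\overline{\Gamma}$ is genuinely \emph{closed} (not just the image of a subalgebra under $\exp$), which again is immediate from the diffeomorphism property, since $\exp$ maps the closed subset $\h \subseteq \g$ to a closed subset of $G$.
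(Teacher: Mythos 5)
Your proof is correct, and it is essentially the argument the paper is implicitly relying on: the paper offers no proof of this lemma at all, simply citing it as a classical fact from \cite[Chap.~2]{raghunathan}, and the standard proof there rests on exactly the dictionary you set up (for simply-connected nilpotent $G$, $\exp:\g\to G$ is a diffeomorphism and closed connected subgroups correspond to Lie subalgebras). Your treatment of both halves is sound: the minimal subalgebra containing $\log\Gamma$ exists by intersecting subalgebras, and the finite-index statement reduces via $\exp(v)^m=\exp(mv)$ to invertibility of multiplication by $m$ on a linear subspace. One cosmetic remark: your parenthetical that $m$ can be taken to be ``the order of $\gamma$ in $\Gamma/\Gamma_0$, which divides $n$'' tacitly assumes $\Gamma_0$ is normal in $\Gamma$; since you make no such assumption, you should drop the parenthetical and rely only on the pigeonhole argument you already gave, which needs no normality. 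This does not affect the validity of the proof.
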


\begin{proof}[Proof of Proposition \ref{nilpotent}]. Let $A_i:=A^2 \cap H_i$. In view of Lemma \ref{gleason}, it is enough to prove that $\langle A_{i+1} \rangle \nsubseteq A_{i+1}H_i$ for each $i=0,\dots,k-1$.  Suppose, by contrast, that $\langle A_{i+1} \rangle \subseteq A_{i+1} H_i$.  Note that if $G_1, G_2$ are groups and $G_1 \subseteq X G_2$ with $X$ finite then $[G_1 : G_1 \cap G_2] < \infty$ (indeed we may assume that every element of $X$ lies in $G_1 G_2$ and hence, replacing $x \in X$ by $x g_2$ if necessary, that $X \subseteq G_1$; then $G_1 \subseteq X (G_1 \cap G_2)$). Our supposition therefore implies that  $\langle A_{i+1} \rangle \cap H_i$ has finite index in $\langle A_{i+1} \rangle$. Therefore, by Lemma \ref{connected-closure}, $\overline{\langle A_{i+1} \rangle} = \overline{ \langle A_{i+1} \rangle \cap H_i}\subset H_i$ and thus $A_{i+1}=A_i$. This is contrary to our assumption. 
\end{proof}

\section{An element with large centralizer}\label{commutator}

Another key part of our proof is to establish that any approximate subgroup has a large centraliser. This is, of course, a necessary step towards exhibiting the sought after nilpotent structure. 

This idea has intervened several times before under slightly different guises in the classification of approximate groups, be it in Helfgott's original paper \cite{helfgott-sl2}, in the first two authors' classification of approximate subgroups of compact Lie groups \cite{breuillard-green-unitary}, or in our recent paper on the structure of approximate groups in general \cite{bgt-structure}. It is also closely related to the key idea in the proof of the Margulis lemma in Riemannian geometry, or in the well-known geometric proof by Frobenius and Bieberbach of Jordan's theorem on finite linear groups.

\begin{proposition}\label{nilpotent-commutator} Let $G$ be a residually nilpotent group and let $A$ be a $K$-approximate subgroup of $G$. Then, there is an element $\gamma \in A^2$ such that $$|A^2 \cap C_G(\gamma)| \geq \frac{|A|}{K^6}.$$
\end{proposition}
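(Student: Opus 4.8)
The plan is to exploit the residual nilpotence of $G$ together with a pigeonholing argument on commutators. Let me explain the mechanism. Pick any element $g \in A$. The idea is to look at the map sending $a \in A^2$ to the commutator $[g,a] = g^{-1}a^{-1}ga$. If many values of $a$ give the same commutator value, then the corresponding quotients lie in the centralizer of $g$, and we get a large subset of $A^2$ inside $C_G(g)$. The obstacle is that a priori the commutator map could be injective on $A^2$, so there is no immediate pigeonhole. This is where residual nilpotence must enter: if the commutator map $a \mapsto [g,a]$ is "spread out", then the images $[g,a]$ for $a \in A$ live in $\langle A^2 \rangle \cap [G,G]$, which is a smaller set because commutators in a nilpotent group are forced into lower terms of the lower central series.

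First, I would set up the lower central series $G = G^{(1)} \supseteq G^{(2)} \supseteq \cdots$ and, using that $G$ is residually nilpotent, work modulo an appropriate term $G^{(s)}$ where $s$ is chosen so that the quotient still "sees" the relevant structure of $A$; more precisely, after passing to the quotient we may assume $G$ is genuinely nilpotent of some finite (but unbounded) step, and that the quotient map is injective on enough of $A$ that the approximate-group parameters degrade only by controlled factors (here Lemma~\ref{basic}(iii) guarantees the image is still a $K$-approximate subgroup). Then I would run an induction or iteration down the lower central series. At each level, one considers the commutator pairing $A \times A \to G^{(j)}/G^{(j+1)}$ and uses a counting inequality: the product set $A^5$ (or some bounded power of $A$) maps onto the set of commutator values, which by Lemma~\ref{basic}(ii) applied to the subgroup $G^{(j+1)}$ combined with the covering property controls cardinalities. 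If at some stage the commutator map has a fiber of size $\geq |A|/K^6$, we are done, since that fiber sits inside a coset of $C_G(g)$ intersected with $A^2$ (after translating, inside $A^4 \cap C_G(g)$, and one cleans this up to $A^2 \cap C_G(\gamma)$ for a suitable $\gamma = [g,a_0] \in A^2$ or by re-choosing). Otherwise the commutator map is nearly injective at every level, which forces $|A^{O(1)}|$ to be too large compared to $K|A|$, a contradiction with the approximate-group hypothesis.

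More concretely, the core estimate I expect to use is: if $B \subseteq A$ has the property that $a \mapsto [\gamma, a]$ is injective on $B$ for some fixed $\gamma \in A$, then $\{[\gamma,a] : a \in B\} \subseteq A^4 \cap [G,G]$, so $|B| \leq |A^4 \cap [G,G]|$; and by Lemma~\ref{basic}(ii) with $H = [G,G]$ we get $|A^4 \cap [G,G]| \leq K^3 |A^2 \cap [G,G]|$. Iterating this down the lower central series, each step costs a bounded power of $K$, and because the series terminates (in the nilpotent quotient) the product of these factors is controlled—but the subtlety is that the number of steps is \emph{not} bounded, so one cannot simply multiply. The resolution is that one does not iterate blindly: instead, one picks $\gamma$ to be a commutator that is "deepest" in the lower central series among those arising from $A$, i.e. $\gamma \in (A^2 \cap G^{(s)}) \setminus G^{(s+1)}$ for the largest such $s$ with $A^2 \cap G^{(s)} \neq \{\id\}$ (if $A^2 \subseteq G^{(s)}$ is trivial the statement is vacuous as $A = \{\id\}$). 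Then $\gamma$ is central modulo $G^{(s+1)}$, so $[g,a] \equiv \id \pmod{G^{(s+1)}}$ fails to help directly; rather, one uses that $\gamma$ itself has the property that $C_G(\gamma) \supseteq$ a large chunk, because conjugation by elements of $A$ moves $\gamma$ only within $\gamma G^{(s+1)}$, and $A^2 \cap G^{(s+1)} = \{\id\}$ by maximality, so in fact \emph{every} element of $A$ commutes with $\gamma$ once we are careful—but that over-proves it, signaling that the maximality must be relative (we only know $A^2$ is small in $G^{(s+1)}$, not trivial). So the honest argument: let $s$ be chosen so that $|A^2 \cap G^{(s)}| \geq |A|/K^6$ while $|A^2 \cap G^{(s+1)}|$ drops below that threshold—or rather, telescoping $|A^2 \cap G^{(j)}| / |A^2 \cap G^{(j+1)}|$ across $j$ and noting the product telescopes to $|A^2|/1 \leq K|A|/\ldots$, there must be a level where the ratio is small, and at that level one extracts $\gamma \in A^2 \cap G^{(j)}$ with large centralizer intersection because $[\gamma, A] \subseteq A^4 \cap G^{(j+1)}$ has size comparable to $A^2 \cap G^{(j+1)}$, which is a controlled fraction of $A^2 \cap G^{(j)}$, and a fiber of the conjugation action then has size $\geq |A^2 \cap G^{(j)}| / |A^4 \cap G^{(j+1)}| \geq |A|/K^6$ by the telescoping choice.

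The main obstacle, then, is organizing the telescoping so that exactly one factor of $K^6$ appears rather than a power growing with the (unbounded) nilpotency class: this is achieved by the product $\prod_j \frac{|A^2 \cap G^{(j)}|}{|A^2 \cap G^{(j+1)}|} = |A^2| \leq |A^3| \leq K|A^2|$-type bounds from Lemma~\ref{basic}, which show the product of the ratios is at most roughly $|A^3|/|A| \leq K^2$ or so (using $|A^k \cap H| \leq K^{k-1}|A^2 \cap H|$), hence \emph{some} individual ratio is at least as large as the "average", forcing a small complementary ratio elsewhere and hence a large centralizer fiber. Assembling the constants to land exactly at $K^6$ and identifying $\gamma$ as a genuine element of $A^2$ (not merely $A^4$) is the bookkeeping I would save for the write-up; the conceptual content is the commutator-fiber-versus-lower-central-series dichotomy sketched above.
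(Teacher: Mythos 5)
Your opening instincts match the paper's argument closely: you correctly identify that one should work with the lower central series $C_1 = G \supseteq C_2 = [G,G] \supseteq \cdots$, pick the largest $k$ with $A^2 \cap C_k \neq \{\id\}$ (residual nilpotence plus finiteness of $A$ make $k$ finite), choose $\gamma \in A^2 \cap C_k$ nontrivial, and pigeonhole on the commutator map $a \mapsto [\gamma,a]$. That is exactly the paper's mechanism, and had you pursued it you would have landed on the published proof.

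Where you go wrong is the claim that this choice ``over-proves'' the result. You reason that since $[\gamma,a] \in C_{k+1}$ and $A^2 \cap C_{k+1} = \{\id\}$, every $a \in A$ would commute with $\gamma$. But $[\gamma,a] = \gamma^{-1}a^{-1}\gamma a$ lies in $A^6$, not $A^2$ (since $\gamma \in A^2$ and $a \in A$), and the maximality only kills $A^2 \cap C_{k+1}$, not $A^6 \cap C_{k+1}$. There is no over-proof and no need to weaken maximality. Misdiagnosing this, you abandon the correct route and replace it with a telescoping scheme over all levels $j$, which as sketched does not close: the product $\prod_j |A^2 \cap C_j|/|A^2 \cap C_{j+1}|$ telescoping to $|A^2| \leq K|A|$ does not force any single ratio to be $\leq K^{O(1)}$ when the nilpotency class is unbounded (most ratios can equal $1$), and even if some ratio were small you would still need to control $|A^6 \cap C_{j+1}|$, not $|A^2 \cap C_{j+1}|$, to bound the number of commutator values. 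You flag the need to land on $\gamma \in A^2$ rather than $A^4$ as ``bookkeeping,'' but in the telescoping version this is not obviously repairable.

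The one genuinely missing idea is the bound $|A^6 \cap C_{k+1}| \leq K^6$. The paper obtains it by observing that the product map $\phi : A \times (A^6 \cap C_{k+1}) \to A^7$, $\phi(a,x) = ax$, is injective: if $ax = a'x'$ then $a^{-1}a' \in A^2$ and $a^{-1}a' = x(x')^{-1} \in C_{k+1}$, so $a^{-1}a' \in A^2 \cap C_{k+1} = \{\id\}$, forcing $a = a'$, $x = x'$. Hence $|A|\cdot|A^6 \cap C_{k+1}| \leq |A^7| \leq K^6|A|$. (One could equivalently invoke Lemma~\ref{basic}(ii) with $H = C_{k+1}$.) With this in hand, the rest of your original plan goes through verbatim: $[\gamma,\cdot]$ takes at most $K^6$ values on $A$, a popular fiber has $\geq K^{-6}|A|$ elements, and for $x,y$ in the same fiber one checks $x^{-1}y \in C_G(\gamma)$, giving $|A^2 \cap C_G(\gamma)| \geq K^{-6}|A|$. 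In short: keep your first paragraph's approach, discard the telescoping, and supply the injectivity argument bounding $|A^6 \cap C_{k+1}|$.
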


\begin{proof} Let $C_1=G$ and $C_{i+1}=[G,C_i]$ be the central descending series of $G$. Let $k$ be the largest integer such that $A^2 \cap C_k \neq \{\id\}$. By the residually nilpotent assumption, and the fact that $A$ is a finite set, $k$ is finite. It follows that the map $\phi : A \times (A^6 \cap C_{k+1}) \rightarrow A^7$ defined by $\phi (a,x) = ax$ is injective. Indeed if $\phi(a,x) = \phi(a', x')$ then $ax=a'x'$, and so $a^{-1}a' \in A^2 \cap C_{k+1}=\{\id\}$, whence $a=a'$ and $x=x'$. As a consequence of this and the approximate group property of $A$ we get \[ |A| |A^6 \cap C_{k+1}| \leq |A^7| \leq K^6 |A|,\] and thus $$|A^6 \cap C_{k+1}| \leq K^6.$$

Now let $\gamma$ be any element of $A^2 \cap C_k$ other than the identity. If $a \in A$ then clearly $[\gamma,a] \in C_{k+1} \cap A^6$ and therefore $[\gamma,a]$ can take at most $K^6$ possible values as $a$ varies in $A$. Let $[\gamma,x]$ be the most popular such value. We have $[\gamma,x]=[\gamma,y]$ for at least $K^{-6}|A|$ elements $x,y \in A$. For any two such $x,y$ a short calculation confirms that $x^{-1}y \in C_G(\gamma)$. We conclude $|A^2 \cap C_G(\gamma)| \geq K^{-6}|A|$, which is what we wished to prove.
\end{proof}

\section{Proof of Theorem \ref{freiman} and Corollary \ref{cor}}\label{end}

We first prove Theorem \ref{freiman}. We will build inductively two nested sequences  $$G=G_0 \supset G_1 \supset  \dots \supset G_k$$ and $$\{1\}=H_0 \subset H_1 \subset \dots \subset H_k $$ of connected closed subgroups of $G$, together with elements $\gamma_{i}\in A^4$ such that, for each $i=1,\dots,k$:
\begin{enumerate}
\item each $H_i$ is a normal subgroup of $G_i$;
\item $\gamma_{i}$ normalizes $H_{i-1}$ and $H_{i}=\overline{\langle H_{i-1},\gamma_{i}\rangle}$ is the connected subgroup generated by $H_{i-1}$ and $\gamma_{i}$;
\item $\gamma_{i} \in  H_i\setminus H_{i-1}$;
\item $|A^2 \cap G_i| \geq K^{-n_i}|A|$, where $n_i$ is a increasing sequence of integers to be determined later.
\end{enumerate}

Moreover, assuming that such data has been built up to level $k$, we will be able to build a new level $(k+1)$ provided that $A^2 \cap G_k \nsubseteq H_k$.

Before establishing the existence of the above data, let us see how this concludes the proof of Theorem \ref{freiman}. From (ii) and (iii) we see that $\dim H_{i}=\dim H_{i-1} +1$, and hence $\dim H_i=i$ for all $i \leq k$. Since $G$ is finite dimensional, this implies that the process must stop at some finite time $k$, at which point we must have $A^2 \cap G_k \subseteq H_k$. However, (iii) implies that $\gamma_i \in A^4 \cap (H_{i} \setminus H_{i-1})$ and thus  $A^4 \cap H_{i-1}$ is strictly contained in $A^4 \cap H_{i}$ for all $i=1,\dots,k$. From Proposition \ref{nilpotent}, we conclude that $|A^{10}| \geq k |A^2|$. Since $A$ is a $K$-approximate group, $|A^{10}| \leq K^9|A|$ and hence $k \leq K^9$. However, (iv) implies that $|A^2 \cap G_k| \geq K^{-n_k}|A|$. Since $A^2 \cap  G_k \subseteq H_k$ it follows that  $|A^2 \cap H_k| \geq K^{-n_k}|A|$ and we conclude by Lemma \ref{basic}(i) that $A$ can be covered by at most $K^{2+n_k}$ translates of the closed subgroup $H_k$, which has dimension $k \leq K^9$. An explicit value for $n_i$ is computed below; it gives the desired bound in Theorem \ref{freiman} and this concludes the proof.

It remains to establish that the data above can indeed be constructed. This will be done by induction on $k$; assuming that the data has been built up to some level $k \geq 0$ and that $A^2 \cap G_k \nsubseteq H_k$, we will build the $(k+1)$-st level.

Let $A'_k:=(A^2 \cap G_k)/H_k$. By our assumption $A'_k \neq \{\id\}$, and so from Lemma \ref{basic} we see that $A'_k$ is a $K^3$-approximate subgroup of $G_k/H_k$. Since $G_k/H_k$ is a simply connected nilpotent Lie group, we may apply Proposition \ref{nilpotent-commutator} to it and conclude that there exists some element $\gamma_{k+1}' \in {A_{k}^{\prime}}^2\setminus \{\id\}$ such that
\begin{equation}\label{commbound}
|{A_{k}^{\prime}}^2 \cap C_{G_k/H_k}(\gamma_{k+1}')| \geq K^{-18}|A_k'|.
\end{equation}
Note that $C_{G_k/H_k}(\gamma_{k+1}')$ is a closed connected subgroup of $G_k/H_k$, because in a simply-connected nilpotent Lie group the centraliser of an element coincides with the analytic subgroup whose Lie algebra is the centraliser of the logarithm of that element. We can then define $G_{k+1}$ to be the closed connected subgroup of $G_k$ such that $C_{G_k/H_k}(\gamma_{k+1}')=G_{k+1}/H_k$.

We can also lift $\gamma_{k+1}'$ to an element $\gamma_{k+1} \in (A^2 \cap G_k)^2$ so that $\gamma_{k+1}H_k=\gamma_{k+1}'$. Note that $\gamma_{k+1}
\notin H_k$ and that $\gamma_{k+1} \in G_{k+1}$.

We then define $H_{k+1}$ to be the closed connected subgroup of $G_{k+1}$ generated by $H_k$ and $\gamma_{k+1}$. Since $G_{k+1}$ commutes with $\gamma_{k+1}$ modulo $H_k$, we conclude that $H_{k+1}$ is a normal subgroup of $G_{k+1}$. This shows the first three items (i), (ii) and (iii).

It remains to establish (iv). From \eqref{commbound} we obtain \begin{equation}\label{newcommbound}|(A^4 \cap G_{k+1})/H_k| \geq K^{-18}|A'_k|,\end{equation} and from Lemma \ref{basic} (i) we have $$K^{11}|A^2 \cap G_{k+1}| \geq |A^{12} \cap G_{k+1}| \geq |(A^4 \cap G_{k+1})^3| \geq |(A^4 \cap G_{k+1})/H_k||A^4 \cap H_k|.$$ Combining this last line with \eqref{newcommbound} yields $$K^{11}|A^2 \cap G_{k+1}| \geq K^{-18}|(A^2 \cap G_k)/H_k||A^2 \cap H_k| \geq K^{-18}|A^2 \cap G_k|,$$ where the last inequality follows from Lemma \ref{basic} (i). Therefore $$|A^2 \cap G_{k+1}| \geq K^{-29}|A^2 \cap G_k|  \geq K^{-n_k - 29}|A|.$$ We thus see that we can set $n_{k+1}=n_k+29$, or alternatively $n_i=29i$ for all $i$. This ends the proof of Theorem \ref{freiman}.

\begin{proof}[Proof of Corollary \ref{cor}] Suppose first that $G$ is torsion-free nilpotent. Without loss of generality, we may assume that $G$ is generated by the finite set $A$. We may then consider the Malcev closure $\widetilde{G}$ of $G$ over $\R$ (see \cite[Chap. 2]{raghunathan}) and apply Theorem \ref{freiman} to conclude that $A$ is covered by $\exp(K^{O(1)})$ cosets of a closed connected subgroup $L$ of $\widetilde{G}$ with dimension at most $K^{O(1)}$. One of these cosets intersects $A$ in more than $\exp(-K^{O(1)})|A|$ elements, from which it follows that $|A^2 \cap L| \geq \exp(-K^{O(1)})|A|$. We let $H$ be the subgroup generated by $A^2 \cap L$. It is a finitely generated torsion-free nilpotent group with nilpotency class (and indeed length of any composition series) at most the dimension of $L$, that is to say at most $K^{9}$. Finally, by Lemma \ref{basic} (i), we see that $A$ can be covered by at most $\exp(K^{O(1)})$ cosets of $H$, as desired.

Now if $G$ is only assumed to be residually torsion-free nilpotent, then for every constant $M\geq 1$, there exists a torsion-free nilpotent quotient $G/\ker \pi$ of $G$ such that $A^{M} \cap\ker \pi = \{\id\}$. If $M$ is taken larger than the word length of every commutator of length $K^9$, then all commutators of length $K^9$ in the elements of $\pi^{-1}(\pi(A)^2 \cap L)$ are trivial, and thus the subgroup $H$ of $G$ generated by $\pi^{-1}(\pi(A)^2 \cap L)$ is nilpotent with nilpotency class at most $K^9$. By Lemma \ref{basic} (i), we see that $A$ intersects only at most $\exp(K^{O(1)})$ cosets of $H$ and we are done.
\end{proof}


\begin{thebibliography}{99}

\bibitem{breuillard-green} E.~Breuillard and B.~J.~Green, \emph{Approximate groups, I: the torsion-free nilpotent case}, J. Inst. Math. 
Jussieu \textbf{10} (2011), no. 1, 37--57. 

\bibitem{bg} E.~Breuillard and B.~J.~Green, \emph{Approximate groups, II: the solvable linear case}, Quart. J. of Math (Oxford) \textbf{62} (2011), no. 3, 513--521. 

\bibitem{breuillard-green-unitary} E.~Breuillard and B.~J.~Green, \emph{Approximate groups, III: the unitary case}, preprint (2010), available at arXiv:1006.5160. 

\bibitem{bgt} E.~Breuillard, B.~J.~Green and T.~C.~Tao, \emph{Approximate subgroups of linear groups}, Geom. Funct. Anal. \textbf{21} (2011), no. 4, 774--819. 
 
\bibitem{bgt-structure} E.~Breuillard, B.~J.~Green and T.~C.~Tao, \emph{The structure of approximate groups}, preprint (2011), available at arXiv:1110.5008.

\bibitem{fisher-katz-peng} D.~Fisher, N.~H.~Katz and I.~Peng, \emph{Approximate multiplicative groups in nilpotent Lie groups,}  Proc. Amer. Math. Soc. \textbf{138} (2010), no. 5, 1575--1580.
 
\bibitem{gill-helfgott} N.~Gill and H.~A.~Helfgott, \emph{Growth in solvable subgroups of $\GL_r(\Z/p\Z)$}, preprint (2010), available at arXiv:1008.5264. 

\bibitem{gleason3} A.~M.~Gleason, \emph{Compact subgroups,} Proc. Nat. Acad. Sci. (1951), 622--623. 

\bibitem{green-ruzsa} B.~J.~Green and I.~Z.~Ruzsa, \emph{Freiman's theorem in an arbitrary abelian group,} J. Lond. Math. Soc. \textbf{75} (2007), no. 1, 163--175.

\bibitem{helfgott-sl2} H.~A.~Helfgott, \emph{Growth and generation in $\SL_2 (\Z/p\Z)$}, Ann. Math. \textbf{167} (2008), 601--623. 

\bibitem{helfgott-sl3} H.~A.~Helfgott. \emph{Growth in $\SL_3(\Z/p\Z)$,} J. Eur. Math. Soc. \textbf{13} (2011), no. 3, 761--851. 

\bibitem{pyber-szabo} L.~Pyber and E.~Szab\'o, \emph{Growth in finite simple groups of Lie type of bounded rank,} preprint (2010), arXiv:1005.1858.

\bibitem{raghunathan} M.~Raghunathan, \emph{Discrete Subgroups of Lie Groups,} Grundlehren der Mathematischen Wissenschaften [Fundamental Principles of Mathematical Sciences]. Springer-Verlag, Berlin. 

\bibitem{tao-noncommutative} T.~C.~Tao, \emph{Product set estimates for non-commutative groups,} Combinatorica \textbf{28} (2008), no. 5, 547--594. 

\bibitem{tao-solvable} T.~C.~Tao, \emph{Freiman's theorem for solvable groups,} Contrib. Discrete Math. \textbf{5} (2010), no. 2, 137--184. 

\bibitem{tv-book} T.~C.~Tao and V.~H.~Vu, \emph{Additive combinatorics}, volume 105 of Cambridge Studies in Advanced Mathe- 
matics. Cambridge University Press, Cambridge, 2006. 

\end{thebibliography}
\end{document}